\newtheorem{theorem}{Theorem}
\newtheorem{lemma}[theorem]{Lemma}
\newcounter{paraga}[subsection]
\begin{document}

\def\MP{\,{<\hspace{-.5em}\cdot}\,}
\def\SP{\,{>\hspace{-.3em}\cdot}\,}
\def\PM{\,{\cdot\hspace{-.3em}<}\,}
\def\PS{\,{\cdot\hspace{-.3em}>}\,}
\def\EP{\,{=\hspace{-.2em}\cdot}\,}
\def\PP{\,{+\hspace{-.1em}\cdot}\,}
\def\PE{\,{\cdot\hspace{-.2em}=}\,}
\def\N{\mathbb N}
\def\C{\mathbb C}
\def\Q{\mathbb Q}
\def\R{\mathbb R}
\def\T{\mathbb T}
\def\A{\mathbb A}
\def\Z{\mathbb Z}
\def\demi{\frac{1}{2}}

\begin{titlepage}
\author{Abed Bounemoura~\footnote{CNRS - CEREMADE, Universit\'e Paris Dauphine \& IMCCE, Observatoire de Paris (abedbou@gmail.com)} {} and Vadim Kaloshin~\footnote{University of Maryland at College Park (vadim.kaloshin@gmail.com)}}
\title{\LARGE{\textbf{A note on micro-instability for Hamiltonian systems close to integrable}}}
\end{titlepage}

\maketitle

\begin{abstract}
In this note, we consider the dynamics associated to a perturbation of an integrable Hamiltonian system in action-angle coordinates in any number of degrees of freedom and we prove the following result of ``micro-diffusion": under generic assumptions on $h$ and $f$, there exists an orbit of the system for which the drift of its action variables is at least of order $\sqrt{\varepsilon}$, after a time of order $\sqrt{\varepsilon}^{-1}$. The assumptions, which are essentially minimal, are that there exists a resonant point for $h$ and that the corresponding averaged perturbation is non-constant. The conclusions, although very weak when compared to usual instability phenomena, are also essentially optimal within this setting. 
\end{abstract}

\section{Introduction and result}\label{s1}

\subsection{Introduction}\label{s11}

Let $n \geq 2$ be an integer, $B=B_1 \subseteq \R^n$ be the unit open ball with respect to the supremum norm $|\,.\,|$ and $\T^n:=\R^n / \Z^n$. Consider a smooth (at least $C^2$) Hamiltonian function $H$ defined on the domain $\T^n \times B$ of the form
\begin{equation}\label{Ham}\tag{H}
H(\theta,I)=h(I)+\varepsilon f(\theta,I), \quad \varepsilon \geq 0, \quad (\theta,I) \in \T^n \times B, 
\end{equation} 
and its associated Hamiltonian system
\begin{equation*}
\begin{cases} 
\dot{\theta}(t)=\partial_I H(\theta(t),I(t))=\partial_I h(I(t))+ \varepsilon \partial_I f(\theta(t),I(t)), \\
\dot{I}(t)=- \partial_\theta H(\theta(t),I(t))=- \varepsilon \partial_\theta f(\theta(t),I(t)). 
\end{cases}
\end{equation*} 
For $\varepsilon=0$, the system is stable in the sense that the action variables $I(t)$ of all solutions are constant, and these solutions are quasi-periodic. Now for $\varepsilon \neq 0$ but sufficiently small, the celebrated KAM theorem (\cite{Kol54}, \cite{Arn63a}, \cite{Mos62}) and Nekhoroshev theorem (\cite{Nek77}, \cite{Nek79}) assert that the system, provided it is real-analytic, retains some stability properties: for a generic $h$ and all $f$, ``most" solutions are quasi-periodic and the action variables of all solutions are almost constant for a very long interval of time.

Yet in the same setting, Arnold conjectured in the sixties that for a generic $h$ and for $n \geq 3$, the following phenomenon of instability should occur: ``for any points $I'$ and $I''$ on the connected level hypersurface of $h$ in the action space there exist orbits connecting an arbitrary small neighborhood of the torus $I=I'$ with an arbitrary small neighborhood of the torus $I=I''$, provided that $\varepsilon$ is sufficiently small and that $f$ is generic" (see \cite{Arn94}).

Since Arnold's original example of such a phenomenon (\cite{Arn64}), this question has been investigated extensively, but only recently solutions to this conjecture have appeared for convex $h$  (see \cite{KZ12}, \cite{Che13}
for $n=3$ and \cite{KZ14a} for $n=4$ and a progress for any $n>4$ in \cite{BKZ}, \cite{KZ14b}). For non-convex integrable Hamiltonians that posses a ``super-conductivity channel" (that is, a rational subspace contained in an energy level), it is very simple to construct examples of perturbation having unstable solutions (see \cite{Mos60}, \cite{Nek79}), and this is also true for a generic perturbation for $n=2$ (\cite{BK14}). Apart from these two classes of integrable Hamiltonians (the convex ones and the ones that posses a super-conductivity channel), nothing is known, even for a specific perturbation. 

It is our purpose here to show, using the method of \cite{BK14}, that for a generic integrable Hamiltonian (this generic condition being the existence of a resonant point) and for a generic perturbation (the associated averaged perturbation is non-constant), one has a phenomenon of ``micro-instability": existence of a solution whose action variables drift of order $\sqrt{\varepsilon}$ after a time of order $\sqrt{\varepsilon}^{-1}$.

\subsection{Result}\label{s12}

Let $H$ be as in~\eqref{Ham}, we assume it is of class $C^3$ and
\begin{equation}\label{bound}
|h|_{C^2(B)} \leq 1, \quad |f|_{C^3(\T^n \times B)} \leq 1
\end{equation}
where $|\,.\,|_{C^2(B)}$ (respectively $|\,.\,|_{C^3(\T^n \times B)}$) denotes the standard $C^2$-norm for functions defined on $B$ (respectively the standard $C^3$-norm for functions defined on $\T^n \times B$). Our first general assumption is on the integrable Hamiltonian $h$.

\bigskip

$(A.1)$. There exists $I^* \in B$ such that $\omega:=\partial_I h(I^*)$ is resonant but non-zero, that is $k \cdot \omega=0$ for some (but not all) $k \in \Z^n \setminus\{0\}$, where $\cdot$ denotes the Euclidean scalar product. 

\bigskip

We denote by $\Lambda$ the real subspace of $\R^n$ spanned by the $\Z$-module $\{k \in \Z^n \; | \; k \cdot \omega=0\}$. By assumption, the dimension of $\Lambda$ is at least $1$ and at most $n-1$. Without loss of generality, we will assume that $I^*=0$. For our second assumption, we define
\[ f_{\omega}(\theta,I):=\lim_{t \rightarrow +\infty}\frac{1}{t}\int_0^t f(\theta+s\omega,I)ds, \quad f_{\omega}^*(\theta):=f_\omega(\theta,I^*)=f_\omega(\theta,0). \]
Expanding $f$ in Fourier series, $f(\theta,I)=\sum_{k \in \Z^n}f_k(I)e^{i2\pi k\cdot \theta}$, we have a more explicit expression
\[ f_{\omega}^*(\theta)=\sum_{k \in \Z^n \cap \Lambda}f_k(I^*)e^{i2\pi k\cdot \theta}=\sum_{k \in \Z^n \cap \Lambda}f_k(0)e^{i2\pi k\cdot \theta}. \] 
Our second general assumption is as follows.

\bigskip

$(A.2)$ The function $f_{\omega}^*$ is non-constant, that is there exists $\theta^* \in \T^n$ such that $|\partial_\theta f_\omega^*(\theta^*)|=\lambda>0$. 

\bigskip

From~\eqref{bound} we necessarily have $\lambda\leq 1$. In order to state precisely our theorem, we need further definitions. Let $\Lambda^\perp$ be the orthogonal complement of $\Lambda$ (observe that $\Lambda^\perp$ is nothing but the minimal rational subspace of $\R^n$ containing $\omega$). Let us define $\Psi=\Psi_{\omega}$ by
\begin{equation}\label{funcpsi}
\Psi(Q)=\max\left\{|k\cdot\omega|^{-1} \; | \; k\in \Lambda^\perp \cap \Z^n, \; 0<|k|\leq Q \right\}.
\end{equation} 
This is well-defined for $Q\geq Q_\omega$, where $Q_\omega \geq 1$ is a constant depending on $\omega$ (see \cite{BF13}). Then for $x \geq Q_\omega\Psi(Q_\omega)$, we define $\Delta=\Delta_{\omega}$ by
\begin{equation}\label{funcdelta}
\Delta(x)=\sup\{Q \geq Q_\omega\; | \; Q\Psi(Q)\leq x\}.
\end{equation} 
We can finally state our theorem.

\begin{theorem}\label{th}
Let $H$ be as in~\eqref{Ham} satisfying~\eqref{bound}, and assume that $(A.1)$ and $(A.2)$ holds true. There exist positive constants $\kappa=\kappa(n,|\omega|,\Lambda)$, $\mu_0=\mu_0(n,|\omega|,\Lambda,\lambda)$, $c=c(\lambda,\Lambda)$ and $\delta=\delta(\lambda,\Lambda)$ such that if
\[ 0<\mu(\sqrt{\varepsilon}):=\left(\Delta\left(\kappa\sqrt{\varepsilon}^{-1}\right)\right)^{-1}\leq \mu_0, \]
then there exists a solution $(\theta(t),I(t))$ of the system~\eqref{Ham} such that
\[ |I(\tau)-I(0)| \geq c \sqrt{\varepsilon}, \quad \tau:=\delta/\sqrt{\varepsilon}.  \]
Moreover, there exists a positive constant $C=C(n,|\omega|,\Lambda)$ such that this solution satisfies
\[ d(I(0),I^*)\leq C\sqrt{\varepsilon}\mu(\sqrt{\varepsilon}), \quad d(I(t)-I(0),\Lambda) \leq C\sqrt{\varepsilon}\mu(\sqrt{\varepsilon}), \quad 0\leq t\leq \tau \]
where $d$ is the distance induced by the supremum norm.
\end{theorem}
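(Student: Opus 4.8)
The plan is to exploit a single driving mechanism. After centering at $I^*=0$ and splitting $\R^n=\Lambda\oplus\Lambda^\perp$, the component of the action along $\Lambda$ is pushed by the averaged force $-\varepsilon\,\partial_\theta f_\omega^*$. Heuristically, along any orbit starting near $I=0$ one has $\dot\theta\approx\omega\in\Lambda^\perp$, so the fast angle in the $\Lambda^\perp$ direction rotates with frequency $\omega$ and averages out the non-resonant part of $f$, while the $\Lambda$-angle is slow because the $\Lambda$-component of $\omega=\partial_I h(0)$ vanishes. Choosing the initial angle at the point $\theta^*$ furnished by $(A.2)$, the $\Lambda$-action then obeys $\dot I_\Lambda\approx-\varepsilon\,\partial_\theta f_\omega^*(\theta^*)$, of size $\varepsilon\lambda$, so over the time $\tau=\delta/\sqrt{\varepsilon}$ it drifts by about $\varepsilon\lambda\tau=\lambda\delta\sqrt{\varepsilon}$, which is the asserted order $\sqrt{\varepsilon}$. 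First I would record the self-consistency estimate making this rigorous at the heuristic level: since $\partial_I h(0)$ has no $\Lambda$-component and $|h|_{C^2}\le1$, one gets $|\theta_\Lambda(t)-\theta^*_\Lambda|\le C\varepsilon t^2$, hence at $t=\tau$ a deviation $O(\delta^2)$; as $f_\omega^*$ is $C^3$ with gradient $\le1$, this keeps $|\partial_\theta f_\omega^*|\ge\lambda/2$ along the orbit once $\delta=\delta(\lambda,\Lambda)$ is small enough. This already fixes $c=c(\lambda,\Lambda)$ and $\delta=\delta(\lambda,\Lambda)$.

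To turn this heuristic into a proof I would pass to a resonant normal form adapted to $\Lambda$. Truncating the Fourier expansion of $f$ at an order $Q$ and eliminating the non-resonant modes $k\notin\Lambda$, $0<|k|\le Q$, by a symplectic change of variables $\Phi$ close to the identity, the Hamiltonian becomes $h+\varepsilon[f]+\varepsilon R$, where $[f]$ retains only the resonant modes $k\in\Lambda$ (so that $[f](\theta,0)=f_\omega^*(\theta)$) and $R$ is a remainder. The cohomological equation divides by $k\cdot\omega$, whose worst value over the relevant modes is controlled by $\Psi(Q)$; consequently $\Phi-\mathrm{id}$ is of size $\varepsilon\,\Psi(Q)$. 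Choosing $Q=\Delta(\kappa\sqrt{\varepsilon}^{-1})$ and $\mu=\mu(\sqrt{\varepsilon})=1/\Delta(\kappa\sqrt{\varepsilon}^{-1})=1/Q$, the defining inequality $Q\,\Psi(Q)\le\kappa\sqrt{\varepsilon}^{-1}$ gives $\varepsilon\,\Psi(Q)\le\kappa\sqrt{\varepsilon}\,\mu$. This is exactly the displacement recorded in the theorem: it yields $d(I(0),I^*)\le C\sqrt{\varepsilon}\mu$ (the initial point of the true orbit is the $\Phi$-image of the point $I=0$ that we start from in normal-form coordinates), and since in these coordinates $\partial_\theta[f]$ lies in $\Lambda$, the main drift is along $\Lambda$ while the remainder and the $O(\sqrt{\varepsilon}\mu)$ coordinate change contribute at most $C\sqrt{\varepsilon}\mu$ transverse to $\Lambda$, giving the confinement $d(I(t)-I(0),\Lambda)\le C\sqrt{\varepsilon}\mu$.

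In normal-form coordinates I would then run the dynamical argument of the first paragraph on $h+\varepsilon[f]$, whose $\Lambda$-reduced flow is a generalized pendulum, and produce the drifting orbit together with its a priori estimates by a bootstrap over $[0,\tau]$. Finally the remainder $\varepsilon R$ must contribute, after integration over the time $\tau=\delta/\sqrt{\varepsilon}$, only a fraction of the main drift: this requires $\varepsilon|\partial_\theta R|\,\tau\ll\sqrt{\varepsilon}$, that is $|\partial_\theta R|\ll1/\delta$. The size of $R$ is governed by two competing errors, the small-divisor cost $\Psi(Q)$ incurred in the normal form and the Fourier tail $\sum_{|k|>Q}$, which for a $C^3$ function decays only polynomially; these are balanced precisely when $Q\,\Psi(Q)$ is of order $\sqrt{\varepsilon}^{-1}$, which is why the optimal cutoff is $Q=\Delta(\kappa\sqrt{\varepsilon}^{-1})$ and the smallness threshold is phrased through $\mu(\sqrt{\varepsilon})\le\mu_0$.

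The hard part will be this last balance under finite differentiability. With only $C^3$ regularity one cannot iterate the averaging, so a single step must already render $R$ small enough over a time as long as $\sqrt{\varepsilon}^{-1}$; controlling the non-resonant oscillatory integrals along the true flow, where $\dot\theta$ differs from $\omega$ by $O(\varepsilon)$, and summing them against the slowly converging $C^3$ tail through the arithmetic function $\Psi$, is the delicate point, and is exactly what the choice $Q=\Delta(\kappa\sqrt{\varepsilon}^{-1})$ and the condition $\mu(\sqrt{\varepsilon})\le\mu_0$ are designed to handle. By contrast, the pendulum drift and the bootstrap keeping the orbit in the region where $|\partial_\theta f_\omega^*|\ge\lambda/2$ are elementary once the normal form with a controlled remainder is in place.
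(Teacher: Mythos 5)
Your proposal follows essentially the same route as the paper: a single resonant averaging step adapted to $\Lambda$ with Fourier cutoff $Q=\Delta(\kappa\sqrt{\varepsilon}^{-1})$ (which the paper packages as Lemma~\ref{normal}, imported from \cite{Bou13b} rather than re-derived), followed by the same elementary dynamical bootstrap showing the slow angles move only by $O(\delta^2)$ over $[0,\tau]$ so that the averaged force stays of size $\gtrsim\varepsilon\lambda$, and the same transfer back through the near-identity conjugacy to obtain the drift and the confinement estimates. The quantitative balances you identify (small divisors via $\Psi$, $\varepsilon\Psi(Q)\le\kappa\sqrt{\varepsilon}\mu$, remainder contribution $\ll$ main drift over time $\tau$) match the paper's estimates \eqref{estimates1}--\eqref{estimates3}, so the argument is correct and structurally identical.
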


Observe that $\mu(\sqrt{\varepsilon})$ always converge to zero as $\varepsilon$ goes to zero, more slowly than $\sqrt{\varepsilon}$: for instance, if $\omega$ is periodic (a multiple of a rational vector), then $\mu(\sqrt{\varepsilon})$ is exactly of order $\sqrt{\varepsilon}$, and if $\omega$ is resonant-Diophantine (meaning that it is not rational but the function $\Psi$ defined above grows at most as a power), then $\mu(\sqrt{\varepsilon})$ is of order a power of $\sqrt{\varepsilon}$. In general, the speed of convergence to zero can be arbitrarily slow. Yet the quantity $\sqrt{\varepsilon}\mu(\sqrt{\varepsilon})$ is always smaller than $\sqrt{\varepsilon}$, and so the statement implies that the $\sqrt{\varepsilon}$-drift occurs along the resonant direction $\Lambda$, as in the transverse direction the variation of the action is of order $\sqrt{\varepsilon}\mu(\sqrt{\varepsilon})$ during the interval of time considered. 

\subsection{Some comments}\label{s13}

Let us now briefly discuss the assumptions and conclusions of Theorem~\ref{th}. 

First, if the assumption $(A.1)$ is not satisfied, that is if the image of the gradient map $\partial_I h$ does not contain a resonant point (which means that this image is contained in a non-resonant line), it is not hard to see that the conclusions of the theorem do not hold true: for all solutions and for all $0 \leq t \leq \tau$, the variation of the action variables cannot be of order $\sqrt{\varepsilon}$ (or put it differently, in order to have a drift of order $\sqrt{\varepsilon}$ one needs a time strictly larger than $\tau$). Indeed, one can prove in this case (using normal form techniques) that the system can be (globally) conjugated to another system which consists of an integrable part plus a perturbation whose size is of order $\varepsilon\mu(\sqrt{\varepsilon})$: this implies that for times $0 \leq t \leq \tau$, the variation of the action variables of all solutions is of order at most $\sqrt{\varepsilon}\mu(\sqrt{\varepsilon})$. Then, if the assumption $(A.2)$ is not satisfied, it is also easy to see that the conclusions of the theorem do not hold true for any solution starting close to $I^*=0$: indeed, looking at Lemma~\ref{normal} below, one would get a (local, defined around $I^*=0$) conjugacy to a perturbation of an integrable system, with a perturbation whose size is again of order $\varepsilon\mu(\sqrt{\varepsilon})$. 

Concerning the conclusions, it is also plain to remark that at the time $\tau$ the variation of the action variables cannot be larger than $\sqrt{\varepsilon}$, up to a constant. But more is true in the special case where $h$ is convex (or quasi-convex) and $\Lambda$ is a hyperplane (which is equivalent to $\omega$ being a periodic vector): for a time $T$ which is very large (any fixed power of $\sqrt{\varepsilon}^{-1}$ if $H$ is smooth of even $\exp\left(\sqrt{\varepsilon}^{-1}\right)$ if $H$ is real-analytic), the variation of the action variables of the solution given by Theorem~\ref{th} is of order $\sqrt{\varepsilon}$ for times $0 \leq t \leq T$ (see \cite{Loc92} for the analytic case and \cite{Bou10} for the smooth case). In this situation, one has the curious fact that the variation of the action variables is exactly of order $\sqrt{\varepsilon}$ (in the sense that it is bigger than some small constant times $\sqrt{\varepsilon}$ and smaller than some large constant times $\sqrt{\varepsilon}$) during the very long interval of time $\tau \leq t \leq T$.

\section{Proof of the result}\label{s2}

The proof of Theorem~\ref{th} follows the strategy of \cite{BK14}. On a $\sqrt{\varepsilon}$-neighborhood of the point $I^*$, we will conjugate our Hamiltonian to a simpler Hamiltonian (a resonant normal form plus a small remainder) for which the result will be obtained by simply looking at the equations of motion. Using the fact that the conjugacy is given by a symplectic transformation which is close to identity, the result for our original Hamiltonian will follow. The normal form will be stated in \S\ref{s21}, and the proof of Theorem~\ref{th} will be given in \S\ref{s22}. 

\subsection{A normal form lemma}\label{s21}

Before starting the proof, it will be more convenient to assume that the subspace $\Lambda$ is generated by the first $d$ vectors of the canonical basis of $\R^n$, for $1 \leq d \leq n-1$. This is no restriction, as by a linear symplectic change of coordinates one can always write $\omega=(0,\tilde{\omega})\in \R^d \times \R^{n-d}$ for some non-resonant vector $\tilde{\omega} \in \R^{n-d}$. This enables us to get rid of the dependence on $\Lambda$ in the constants involved. Since $\Lambda^\perp \cap \Z^n=\{0\} \times \Z^{n-d}$, the function $\Psi$ defined in~\eqref{funcpsi} takes the simpler form
\begin{equation*}
\Psi(Q)=\max\left\{|k\cdot\tilde{\omega}|^{-1} \; | \; k \in \Z^{n-d}, \; 0<|k|\leq Q \right\}
\end{equation*} 
and is well-defined for $Q\geq 1$ (that is one can take $Q_\omega=1$). The function $\Delta$ introduced in~\eqref{funcpsi} is then defined for $x \geq \Psi(1)=|\tilde{\omega}|^{-1}$ and we have
\begin{equation*}
\Delta(x)=\sup\{Q \geq 1\; | \; Q\Psi(Q)\leq x\}.
\end{equation*} 
Observe also that in this situation, we simply have
\[ f_\omega(\theta,I)=f_\omega(\theta_1,\dots,\theta_d,I)=\int_{\T^{n-d}}f(\theta_1,\dots,\theta_d,\theta_{d+1},\dots,\theta_n,I)d\theta_{d+1}\dots d\theta_{n}\]
and hence
\[ f_\omega^*(\theta_1,\dots,\theta_d)=f_\omega(\theta_1,\dots,\theta_d,I^*)=f_\omega(\theta_1,\dots,\theta_d,0). \]
The assumption $(A.2)$ thus reduces to the existence of a point $\theta^*=(\theta_1^*,\dots,\theta_d^*) \in \T^d$ and a constant $0<\lambda\leq 1$ such that $|\partial_\theta f_\omega^*(\theta^*)|=\lambda$. Here's the statement of our normal form lemma.

\begin{lemma}\label{normal}
Let $H$ be as in~\eqref{Ham} satisfying~\eqref{bound}. There exist positive constants $\kappa=\kappa(n,|\omega|)$, $\mu_0=\mu_0(n,|\omega|)$ and $C=C(n,|\omega|)$ such that if 
\begin{equation}\label{seuil}
\mu(\sqrt{\varepsilon}):=\Delta\left(\kappa\sqrt{\varepsilon}^{-1}\right)^{-1}\leq \mu_0,
\end{equation}
then there exists a symplectic map $\Phi: \T^n \times B_{2\sqrt{\varepsilon}} \rightarrow \T^n \times B_{3\sqrt{\varepsilon}}$ of class $C^2$ such that
\[ H \circ \Phi(\theta,I)= h(I)+\varepsilon f_\omega(\theta_1,\dots,\theta_d,I)+\tilde{f}_\varepsilon(\theta,I)  \]
with the estimates 
\begin{equation}\label{estimates1}
|\Pi_I \Phi-\mathrm{Id}|_{C^0(\T^n \times B_{2\sqrt{\varepsilon}})} \leq C\sqrt{\varepsilon}\mu(\sqrt{\varepsilon}),   
\end{equation}
\begin{equation}\label{estimates2}
|\partial_\theta \tilde{f}_\varepsilon|_{C^0(\T^n \times B_{2\sqrt{\varepsilon}})} \leq C \varepsilon\mu(\sqrt{\varepsilon}),
\end{equation}
\begin{equation}\label{estimates3}
|\partial_I \tilde{f}_\varepsilon|_{C^0(\T^n \times B_{2\sqrt{\varepsilon}})} \leq C \sqrt{\varepsilon}\mu(\sqrt{\varepsilon}). 
\end{equation}
\end{lemma}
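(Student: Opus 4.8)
The plan is to produce $\Phi$ as the time-one map $X_\chi^1$ of the Hamiltonian flow of a single auxiliary function $\chi$, i.e. as a one-step resonant normal form; no Newton iteration is needed, since we only demand that the transformed perturbation lose a factor $\mu$, not that it become exponentially small. Working in the adapted coordinates of \S\ref{s21}, so that $\omega=(0,\tilde\omega)$ and $\Lambda\cap\Z^n=\Z^d\times\{0\}$, I would first localize to $B_{2\sqrt\varepsilon}$ and Taylor-expand $h(I)=h(0)+\omega\cdot I+O(|I|^2)$, so that on this ball $\partial_I h(I)=\omega+O(\sqrt\varepsilon)$: the effective frequency is the constant vector $\omega$ up to an error of size $\sqrt\varepsilon$, which will be shifted into the remainder. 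Splitting the Fourier series of $f$ into its resonant part (the modes $k\in\Lambda$, which reassemble exactly $f_\omega$) and its non-resonant part $g=\sum_{k\notin\Lambda}f_k e^{i2\pi k\cdot\theta}$, the whole point is to kill $g$ to leading order.

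Because $f$ is only $C^3$, the natural device is analytic smoothing: replace $f$ by an analytic approximant $f_Q$ on a complex strip of width $s\sim Q^{-1}$, with $|f-f_Q|_{C^2}$ small and $|f_Q|$ bounded on the strip, the difference being absorbed into $\tilde f_\varepsilon$. One then solves, in the analytic category, the homological equation $\omega\cdot\partial_\theta\chi=-\varepsilon\,g_Q$, whose Fourier solution is $\chi_k=-\varepsilon (f_Q)_k/(i2\pi\,k\cdot\omega)$ for $k\notin\Lambda$. In the adapted coordinates $k\cdot\omega=k''\cdot\tilde\omega$ depends only on the last $n-d$ components, and by the very definition \eqref{funcpsi} of $\Psi$ one has the divisor bound $|k\cdot\omega|\ge\Psi(|k''|)^{-1}$. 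Setting $Q:=\Delta(\kappa\sqrt\varepsilon^{-1})$, so that $\mu=Q^{-1}$, the arithmetic engine is that $q\mapsto q\Psi(q)$ is nondecreasing: the defining inequality of $\Delta$ then gives the mode-wise bound $|k|\,|k\cdot\omega|^{-1}\le|k''|\Psi(|k''|)\le Q\Psi(Q)\le\kappa\sqrt\varepsilon^{-1}$ for every effective mode, and a clean way to write the consequence is $\sqrt\varepsilon\,\Psi(Q)\le\kappa\mu$. Combined with the (geometric, thanks to smoothing) decay of the coefficients $(f_Q)_k$, this is what converts the loss from small divisors into the single factor $\mu$.

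With $\chi$ in hand I would set $\Phi=X_\chi^1$ and estimate its generator. The closeness estimate \eqref{estimates1} follows from $\Pi_I\Phi-\mathrm{Id}=-\int_0^1\partial_\theta\chi\circ X_\chi^t\,dt$, so that $|\Pi_I\Phi-\mathrm{Id}|_{C^0}\le|\partial_\theta\chi|_{C^0}\lesssim\sqrt\varepsilon\mu$; the same bound guarantees that $\Phi$ is well defined for unit time and maps $\T^n\times B_{2\sqrt\varepsilon}$ into $\T^n\times B_{3\sqrt\varepsilon}$, while the regularity count ($\chi$ inherits the $C^3$ of $f$, its Hamiltonian vector field is $C^2$) yields $\Phi\in C^2$. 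Expanding by Taylor with integral remainder, $H\circ\Phi=H+\{H,\chi\}+\int_0^1(1-t)\{\{H,\chi\},\chi\}\circ X_\chi^t\,dt$, and using the homological equation, the remainder organizes into three pieces: the smoothing tail $\varepsilon(g-g_Q)$, the frequency error $(\partial_I h-\omega)\cdot\partial_\theta\chi$, and the genuinely quadratic Lie terms $\varepsilon\{f,\chi\}$ together with the integral term. Estimates \eqref{estimates2} and \eqref{estimates3} then come from the smoothing bound for the first piece, from $|\partial_I h-\omega|\lesssim\sqrt\varepsilon$ times the $\chi$-bounds for the second, and from the size of $\chi$ times that of $\partial f$ for the third, reading off the extra $\mu$ from the arithmetic above and the asymmetry between $\partial_\theta$ and $\partial_I$ from the factor $\sqrt\varepsilon$ separating the two ball scales.

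The main obstacle is squarely the quantitative control of $\chi$ and its first two derivatives in the sup norm. Everything hinges on tuning the three scales — the smoothing width $s$, the effective truncation order $Q$, and the divisor growth $\Psi(Q)$ — against one another, so that each of the three remainder pieces lands at the stated order ($\varepsilon\mu$ for $\partial_\theta$, $\sqrt\varepsilon\mu$ for $\partial_I$). The delicate point is that the gain must be exactly the one factor $\mu$ governed by $\sqrt\varepsilon\,\Psi(Q)\le\kappa\mu$, and no worse: this is why the monotonicity of $q\mapsto q\Psi(q)$ and the sharp mode-wise divisor estimate (rather than a crude global maximum $\Psi(Q)$) are indispensable, and why the second-order Lie terms — which carry second derivatives of $\chi$ and hence two powers of the small divisor — must be shown not to reintroduce a factor $\Psi(Q)$. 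Establishing all of this uniformly in $\varepsilon$, with constants depending only on $n$ and $|\omega|$, is the technical heart of Lemma~\ref{normal}.
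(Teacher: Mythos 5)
First, a point of reference: the paper does not actually prove Lemma~\ref{normal} --- it is quoted as a special case of Theorem 1.3 of \cite{Bou13b}, whose proof rests on the method of \cite{BF13}: the non-resonant frequency $\tilde{\omega}$ is approximated by a family of periodic vectors and one averages successively along the corresponding periodic flows, the functions $\Psi$ and $\Delta$ entering through a duality between small divisors and periods of rational approximations. No Fourier series, no homological equation and no small divisors appear in that scheme. Your proposal --- a single Fourier-side resonant averaging step, with analytic smoothing to compensate for the finite differentiability --- is therefore a genuinely different route, and you do identify the correct arithmetic engine: $Q=\Delta(\kappa\sqrt{\varepsilon}^{-1})$, the monotonicity of $q\mapsto q\Psi(q)$, and the consequence $\sqrt{\varepsilon}\,\Psi(Q)\leq\kappa\mu$, which is indeed what converts the small-divisor loss into the gain of one factor $\mu$.

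As written, however, the proposal has concrete gaps beyond ``technical work deferred''. Writing $k=(k',k'')\in\Z^d\times\Z^{n-d}$: (i) the mode-wise inequality $|k|\,|k\cdot\omega|^{-1}\le|k''|\Psi(|k''|)$ is false, since $|k|$ can be arbitrarily large while $k''$ stays bounded; the constraint $|k|\le Q$ on the \emph{full} vector matters for a different reason, namely it keeps the true divisors $k\cdot\partial_I h(I)$ comparable to $k\cdot\omega$ on all of $B_{2\sqrt{\varepsilon}}$, via $|k\cdot\partial_I h(I)-k\cdot\omega|\le 2|k|\sqrt{\varepsilon}\le 2Q\sqrt{\varepsilon}\le 2\kappa\Psi(Q)^{-1}$. (ii) Solving the homological equation with the constant frequency $\omega$ and dumping the frequency error $(\partial_I h-\omega)\cdot\partial_\theta\chi$ into $\tilde{f}_\varepsilon$ does not close~\eqref{estimates2}: its $\theta$-derivative is $(\partial_I h-\omega)\cdot\partial_\theta^2\chi$, of size roughly $\sqrt{\varepsilon}\cdot\varepsilon Q\Psi(Q)\sim\varepsilon$, missing the required $\varepsilon\mu$ by precisely the factor you need to gain; one must instead use the $I$-dependent divisors (which (i) makes legitimate) or iterate. (iii) The divisors of modes with $|k''|>Q$ are not controlled by $\Psi(Q)$, so the geometric decay of the smoothed coefficients at scale $s\sim Q^{-1}$ (whose tail beyond $|k|\sim Q$ is only $O(1)$) cannot substitute for a genuine truncation at order $Q$, whose tail must then be beaten down using only $|f|_{C^3}\le 1$; this is where the choice of smoothing width is really constrained, and where the sup-norm control of $\chi$, $\partial\chi$, $\partial^2\chi$ --- which you acknowledge as the technical heart and do not carry out --- must be done carefully. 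In short: a plausible and genuinely alternative plan, but with at least one remainder piece not landing at the stated order under your bookkeeping, it is not yet a proof; the complete argument in the literature is the periodic-approximation proof behind Theorem 1.3 of \cite{Bou13b}.
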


This is a very special case of Theorem 1.3 of \cite{Bou13b}, to which we refer for a proof (strictly speaking, Theorem 1.3 of \cite{Bou13b} would require in our situation the integrable Hamiltonian $h$ to be of class $C^5$, but one can see from the proof that $C^2$ is in fact sufficient).

\subsection{Proof of Theorem~\ref{th}}\label{s22}

Theorem~\ref{th} will be easily deduced from Lemma~\ref{normal}.

\begin{proof}[Proof of Theorem~\ref{th}]
We start by choosing $\varepsilon>0$ sufficiently small so that~\eqref{seuil} holds true. Then we apply Lemma~\ref{normal}: there exists a symplectic map $\Phi: \T^n \times B_{2\sqrt{\varepsilon}} \rightarrow \T^n \times B_{3\sqrt{\varepsilon}}$ of class $C^2$ such that
\[ H \circ \Phi(\theta,I)= h(I)+\varepsilon f_\omega(\theta_1,\dots,\theta_d,I)+\tilde{f}_\varepsilon(\theta,I)  \]
with the estimates~\eqref{estimates1},~\eqref{estimates2} and~\eqref{estimates3}. Obviously $f_\omega$ has unit $C^3$-norm since this is the case for $f$. Consider the solution $(\theta(t),I(t))$ of the Hamiltonian $H \circ \Phi$, starting at $I(0)=I^*=0$, $(\theta_1(0),\dots,\theta_d(0))=\theta^* \in \T^d$ and with $(\theta_{d+1}(0),\dots,\theta_n(0)) \in \T^{n-d}$ arbitrary. It satisfies the following equations:
\begin{equation}\label{mouve}
\begin{cases}
\dot{I}(t)=-\varepsilon\partial_{\theta} f_\omega(\theta_1(t),\dots,\theta_d(t),I(t)) -\partial_{\theta} \tilde{f}_\varepsilon(\theta(t),I(t)), \\
\dot{\theta}(t)=\partial_I h(I(t))+\varepsilon\partial_{I} f_\omega(\theta_1(t),\dots,\theta_d(t),I(t))+\partial_{I} \tilde{f}_\varepsilon(\theta(t),I(t)).
\end{cases}
\end{equation}
Let us fix $\delta:=\sqrt{\lambda/6}$ and let $\tau=\delta/\sqrt{\varepsilon}$. From the first equation of~\eqref{mouve} and the estimate~\eqref{estimates2}, one has
\begin{equation}\label{est1}
|I(t)-I(0)| \leq \delta\sqrt{\varepsilon}+ \delta C \sqrt{\varepsilon}\mu(\sqrt{\varepsilon})\leq \delta\sqrt{\varepsilon}+\delta\sqrt{\varepsilon}= 2\delta\sqrt{\varepsilon}, \quad 0 \leq t \leq \tau,
\end{equation} 
up to taking $\mu_0$ smaller than $C^{-1}$. Using the fact that $\partial_I h(I(0))=\partial_I h(0)=(0,\tilde{\omega}) \in \R^d \times \R^{n-d}$ which follows from our first assumption and the choice of $I(0)$, this last estimate, together with the fact that $h$ has unit $C^2$-norm, imply that 
\[ \max_{1 \leq i \leq d}|\partial_{I_i} h(I(t))|=\max_{1 \leq i \leq d}|\partial_{I_i} h(I(t))-\partial_{I_i} h(I(0))| \leq 2\delta\sqrt{\varepsilon}, \quad 0 \leq t \leq \tau. \]
From the second equation of~\eqref{mouve} and the estimate~\eqref{estimates3}, we obtain from the last estimate
\[ \max_{1 \leq i \leq d}|\dot{\theta}_i(t)|\leq 2\delta\sqrt{\varepsilon}+\varepsilon+C \sqrt{\varepsilon}\mu(\sqrt{\varepsilon}), \quad 0 \leq t \leq \tau. \]
Taking $\mu_0$ small enough with respect to $C$ and $\lambda$ (and hence $\delta$), the sum of the last two terms of the right-hand side of the inequality above can me made smaller than $\delta\sqrt{\varepsilon}$, thus we can ensure that
\[ \max_{1 \leq i \leq d}|\dot{\theta}_i(t)|\leq 3\delta\sqrt{\varepsilon}, \quad 0 \leq t \leq \tau \]
and hence
\[ \max_{1 \leq i \leq d}|\theta_i(t)-\theta_i(0)|\leq 3\delta^2, \quad 0 \leq t \leq \tau. \]
Now from our second assumption and the choice of the initial condition, we have
\[ |\varepsilon\partial_{\theta} f_\omega(\theta_1(0),\dots,\theta_d(0),I(0))|=|\varepsilon\partial_{\theta} f_\omega^*(\theta_1^*,\dots,\theta_d^*)|=\varepsilon\lambda>0\] 
so, using the last estimate and our choice of $\delta$, we get 
\[ |\varepsilon\partial_{\theta} f_\omega(\theta_1(t),\dots,\theta_d(t),I(0))|\geq \varepsilon\lambda-3\varepsilon\delta^2=\varepsilon\lambda-\varepsilon\lambda/2=\varepsilon\lambda/2, \quad 0 \leq t \leq \tau.\]
From~\eqref{est1}, taking $\mu_0$ smaller with respect to $\lambda$, we can make sure that
\[ |\varepsilon\partial_{\theta} f_\omega(\theta_1(t),\dots,\theta_d(t),I(t))|\geq\varepsilon\lambda/3, \quad 0 \leq t \leq \tau\]
but also, from the estimate~\eqref{estimates2}, 
\[ |\varepsilon\partial_{\theta} f_\omega(\theta_1(t),\dots,\theta_d(t),I(t))+\partial_{\theta} \tilde{f}_\varepsilon(\theta(t),I(t))|\geq\varepsilon\lambda/4, \quad 0 \leq t \leq \tau.\]
From the first equation of~\eqref{mouve} we finally have
\[ |I(\tau)-I(0)|\geq \max_{1 \leq i \leq d}|I_i(\tau)-I_i(0)|\geq \sqrt{\varepsilon}\lambda\delta/4 =\sqrt{\varepsilon} 3\delta^3/2 \]
but also, using the estimate~\eqref{estimates2},
\[ \max_{d+1 \leq j \leq n}|I_j(t)-I_j(0)|\leq C\delta\sqrt{\varepsilon}\mu(\sqrt{\varepsilon})\leq C\sqrt{\varepsilon}\mu(\sqrt{\varepsilon}), \quad 0 \leq t \leq \tau.  \]
To conclude, using the estimate~\eqref{estimates1}, this solution for $H \circ \Phi$ gives rise to a solution for $H$ that, abusing notations, we still denote $(\theta(t),I(t))$ and such that, taking once again $\mu_0$ smaller with respect to $\lambda$, satisfies
\[ |I(\tau)-I(0)|\geq \sqrt{\varepsilon} \delta^3:=c\sqrt{\varepsilon} \]
and also, up to enlarging the constant $C$,
\[ d(I(0),I^*) \leq C\sqrt{\varepsilon}\mu(\sqrt{\varepsilon}), \quad d(I(t)-I(0),\R^d \times \{0\})\leq C\sqrt{\varepsilon}\mu(\sqrt{\varepsilon}).  \]
This concludes the proof.
\end{proof}

\bigskip

\noindent{\bf Acknowledgements.} The first author would like to thank IMPA for its hospitality. The second author acknowledges partial support of the NSF grant DMS-1402164.

\addcontentsline{toc}{section}{References}
\bibliographystyle{amsalpha}
\bibliography{Micro-diffusion}

\end{document}